\documentclass[preprint,12pt]{elsarticle}

\usepackage{graphicx}
\usepackage[margin=1.0in]{geometry} 

\usepackage{amsmath} 
\usepackage{amssymb}
\usepackage{mathrsfs}
\usepackage{amsthm}
\usepackage{MnSymbol}

 \numberwithin{dummy}{section} 
\newtheorem{thm}{Rule} 
\newtheorem{thm2}{Restriction} 

\newdefinition{rmk}{Remark}


\usepackage{hyperref}
\usepackage[usenames, dvipsnames]{color}

\usepackage{fancyhdr}
\usepackage{lipsum} 
\fancyhf{} 
\rfoot{\thepage}

\pagestyle{fancy}
\usepackage[linesnumbered,ruled,vlined]{algorithm2e}

\journal{arXiv}

\begin{document}

\begin{frontmatter}

\title{An Efficient Algorithm to Test Potentially Bipartiteness of Graphical Degree Sequences}


\author{Kai Wang\corref{corkw}}
\ead{kwang@georgiasouthern.edu}
\address{Department of Computer Science, Georgia Southern University, Statesboro, GA 30460}

\cortext[corkw]{Corresponding author}

\begin{abstract}
As a partial answer to a question of Rao, a deterministic and customizable efficient algorithm is presented
to test whether an arbitrary graphical degree sequence has a bipartite realization.
The algorithm can be configured to run in polynomial time, at the expense of possibly producing an erroneous output on
some ``yes'' instances but with very low error rate.
\end{abstract}

\begin{keyword}
graphical degree sequence \sep bipartite realization


\end{keyword}

\end{frontmatter}

\section{Introduction}
\label{sec:Intro}
Given an arbitrary graphical degree sequence $\mathbf{d}$, let $\mathscr{R}(\mathbf{d})$ denote the set of 
all of its non-isomorphic realizations. As usual, let $\chi(G)$ and $\omega(G)$ denote the chromatic number and
clique number of a finite simple undirected graph $G$ respectively.
It is known from Punnim \cite{Punnim2002A} that for any given
$\mathbf{d}$ the set $\{\chi(G): G\in \mathscr{R}(\mathbf{d})\}$ is exactly a set of integers in some
interval. Define $X(\mathbf{d})$ to be $\max\{\chi(G): G\in \mathscr{R}(\mathbf{d})\}$
and $\chi(\mathbf{d})$ to be $\min\{\chi(G): G\in \mathscr{R}(\mathbf{d})\}$. These two quantities can be
interesting for the structural properties of all the graphs in $\mathscr{R}(\mathbf{d})$.

Good lower and upper bounds on $X(\mathbf{d})$ are known from Dvo\v{r}\'{a}k and Mohar \cite{Dvorak2013}
in terms of $\Omega(\mathbf{d})=\max\{\omega(G): G\in \mathscr{R}(\mathbf{d})\}$, which can be easily computed for any given
$\mathbf{d}$ using the algorithm from Yin \cite{Yin2012}. For example, $X(\mathbf{d})\ge \Omega(\mathbf{d})$,
$X(\mathbf{d})\le \frac{4}{5}\Omega(\mathbf{d})+\frac{1}{5}\max(\mathbf{d})+1$
and $X(\mathbf{d})\le \frac{6}{5}\Omega(\mathbf{d})+\frac{3}{5}$.

It appears computationally intractable to compute $\chi(\mathbf{d})$ for any given zero-free $\mathbf{d}$.
In this paper we are concerned with the related, somewhat easier, decision problem of whether $\chi(\mathbf{d})=2$. Clearly,
this is equivalent to decide whether $\mathbf{d}$ has a bipartite realization, which is actually the first listed unsolved
problem in Rao \cite{Rao1981} to characterize potentially bipartite graphical degree sequences and which remains
unsolved to our knowledge. Note that the input
$\mathbf{d}$ is a single sequence of vertex degrees. A related problem is to decide, given two sequences of positive
integers $(a_1,a_2,\cdots,a_m;b_1,b_2,\cdots,b_n)$, where $a_1\ge a_2
\ge \cdots \ge a_m$ and $b_1\ge b_2 \ge \cdots \ge b_n$ and $\sum_{i=1}^{m}a_i=\sum_{i=1}^{n}b_i$,
whether there is a bipartite graph whose two partite sets have $\mathbf{a}=(a_1,a_2,\cdots,a_m)$ and
$\mathbf{b}=(b_1,b_2,\cdots,b_n)$ as their respective degree sequences. This problem can be easily solved by applying
the Gale-Ryser theorem \cite{Gale1957,Ryser1957}, which states that the answer is ``yes'' if and only if the conjugate
of $\mathbf{a}$ dominates $\mathbf{b}$ (or, equivalently, the conjugate of $\mathbf{b}$ dominates $\mathbf{a}$).
Here we use the common definition of domination between two partitions of the same integer:
a partition $\mathbf{p}=(p_1,p_2,\cdots)$
dominates a partition $\mathbf{q}=(q_1,q_2,\cdots)$ if $\sum_{i=1}^j p_i\ge \sum_{i=1}^j q_i$ for each $j=1,2,\cdots$.
By convention, $p_j=0$ for $j>\ell(\mathbf{p})$, where $\ell(\mathbf{p})$ denotes the number of parts in the partition
$\mathbf{p}$. We also use $|\mathbf{p}|$ to denote the weight of the partition
$\mathbf{p}$, that is, the sum of all the parts of $\mathbf{p}$.

The rest of the paper is organized as follows. Section \ref{sec:code} describes the algorithm to decide whether
a given $\mathbf{d}$ has a bipartite realization. Section \ref{sec:complexity} gives a time complexity analysis
of the algorithm. Section \ref{sec:exp} presents some experimental results. Section \ref{sec:diss} discusses
alternative designs of the algorithm and comments on the complexity of the decision problem.
Section \ref{sec:conclusion} concludes with further research directions.

\section{Description of the Algorithm}
\label{sec:code}
Clearly, to decide whether any zero-free graphical degree sequence $\mathbf{d}=(d_1\ge d_2 \ge \cdots \ge d_n)$
with weight $|\mathbf{d}|=\sum_{i=1}^n d_i$ has a bipartite realization, we first need to determine whether
it has a bipartition into $\mathbf{a}$ and $\mathbf{b}$ of equal weights $|\mathbf{d}|/2$ (for convenience, we call such
bipartitions of $\mathbf{d}$ \textit{candidate} bipartitions).
One may feel it challenging to find a candidate bipartition of $\mathbf{d}$ in the first place,
because it looks exactly like the well-known subset sum problem, which is known to be NP-complete \cite{GareyJohnson1979}.
Fortunately, since every term in $\mathbf{d}$ of length $n$ is less than $n$, this restricted subset sum problem can be solved
easily through dynamic programming in polynomial time \cite{GareyJohnson1979,Koiliaris2019}. In fact, many
inputs admit a large number of candidate bipartitions. Now we can see that the decision problem boils down to
checking whether $\mathbf{d}$ has at least one candidate bipartition and, if this is the case, whether any of those candidate
bipartitions satisfies the Gale-Ryser condition.

A naive algorithm can simply enumerate all candidate bipartitions of $\mathbf{d}$ and check each of them against the
Gale-Ryser condition. Such an algorithm necessarily runs in exponential time in the worst case. Our algorithm is more
sophisticated than that. It has two phases. The first phase utilizes up to seven rules that can all be easily checked.
As a matter of fact, in Section \ref{sec:exp} we will show that most of the inputs can be resolved by this phase alone.
The second is the enumeration phase, in which we do ``brute-force'' search in a clever way.

In describing and justifying the seven rules in the first phase,
we seek a candidate bipartition of $\mathbf{d}$ into the left side $\mathbf{a}$
and the right side $\mathbf{b}$ in such a way that at least half of the largest terms in $\mathbf{d}$ appear in $\mathbf{a}$, without
loss of generality. For example, for any input $\mathbf{d}$ of length 50 with the largest term 34 whose multiplicity is 5 (i.e. there
are exactly 5 copies of 34 in $\mathbf{d}$),
we will seek a candidate bipartition such that the left side $\mathbf{a}$ contains at least 3 copies of 34.
\begin{thm}
	\label{thm_bipartition}
	If $\mathbf{d}$ does not have a candidate bipartition, then it is not potentially bipartite. 
\end{thm}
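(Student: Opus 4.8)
The plan is to argue by contraposition: I will show that if $\mathbf{d}$ \emph{is} potentially bipartite, then it must admit a candidate bipartition. So assume $G$ is a bipartite realization of $\mathbf{d}$, and fix a bipartition of its vertex set into two independent sets, $V(G)=A\cup B$ with $A\cap B=\emptyset$; by the bipartiteness of $G$, every edge of $G$ joins a vertex of $A$ to a vertex of $B$.

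The one substantive step is a standard double counting of the edges of $G$. Since each edge has exactly one endpoint in $A$, summing the degrees of the vertices in $A$ counts every edge exactly once, so $\sum_{v\in A}\deg_G(v)=|E(G)|$; symmetrically, $\sum_{v\in B}\deg_G(v)=|E(G)|$. In particular both sums equal $|\mathbf{d}|/2$, because $\sum_{v\in V(G)}\deg_G(v)=2|E(G)|=|\mathbf{d}|$.

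To finish, let $\mathbf{a}$ be the multiset of degrees of the vertices in $A$ and $\mathbf{b}$ the multiset of degrees of the vertices in $B$. Because $A$ and $B$ partition $V(G)$, the union of $\mathbf{a}$ and $\mathbf{b}$ is exactly the multiset $\mathbf{d}$, and by the previous paragraph $|\mathbf{a}|=|\mathbf{b}|=|\mathbf{d}|/2$. Thus $(\mathbf{a},\mathbf{b})$ is a candidate bipartition of $\mathbf{d}$ (after sorting each side in nonincreasing order, and, if desired, swapping the two sides so that at least half of the largest terms of $\mathbf{d}$ lie in $\mathbf{a}$, which changes nothing). Taking the contrapositive yields the theorem.

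I do not anticipate any real obstacle here: the proof reduces to a one-line counting observation, and the only borderline situation is an edgeless realization, which is handled by the same computation with $|E(G)|=0$ (and does not arise when $\mathbf{d}$ is zero-free).
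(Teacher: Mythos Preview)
Your proof is correct and is exactly the obvious contrapositive argument the paper has in mind; the paper simply declares the rule ``obvious'' without writing out the double-counting step you supplied. Your version is just a more explicit rendering of the same idea.
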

\begin{proof}
This rule is obvious.
As mentioned above, this rule can be easily implemented through dynamic programming for the subset sum problem.
\end{proof}

\begin{thm}
	\label{thm_Mantel}
	If $|\mathbf{d}|>\frac{n^2}{2}$, then $\mathbf{d}$ is not potentially bipartite. 
\end{thm}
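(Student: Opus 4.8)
The plan is to recognize this as the bipartite case of Mantel's theorem (equivalently, the special case $K_{2,2}$-free... no, simply the trivial edge bound for bipartite graphs) and argue by contraposition. First I would assume, toward a contradiction, that $\mathbf{d}$ \emph{is} potentially bipartite, so that it has a bipartite realization $G$ on $n$ vertices whose vertex set splits into two independent sets of sizes $p$ and $q$ with $p+q=n$. Since every edge of $G$ has one endpoint in each side, $G$ is a subgraph of the complete bipartite graph $K_{p,q}$, and hence $|E(G)| \le pq$.

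Next I would relate $|E(G)|$ to $|\mathbf{d}|$ and bound $pq$. By the handshaking identity, $|E(G)| = \frac{1}{2}|\mathbf{d}|$. For the other side, $pq = \frac{n^2 - (p-q)^2}{4} \le \frac{n^2}{4}$ for every splitting with $p+q=n$ (equivalently, by AM--GM). Combining the three inequalities gives $|\mathbf{d}| = 2|E(G)| \le 2pq \le \frac{n^2}{2}$, which contradicts the hypothesis $|\mathbf{d}| > \frac{n^2}{2}$. Therefore no bipartite realization exists, which is the claim.

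I do not expect a genuine obstacle here; the only point needing a word of care is that the sizes $p$ and $q$ of the two partite sets are not prescribed and may be unequal, but since the bound $pq \le \frac{n^2}{4}$ is uniform over all choices of $p+q=n$, the argument covers every possible candidate bipartition at once. (If one wished for the sharpest possible threshold one could replace $\frac{n^2}{2}$ by $2\lfloor n^2/4 \rfloor$ and invoke the exact extremal graph $K_{\lfloor n/2\rfloor,\lceil n/2\rceil}$, but the stated bound is enough and avoids a parity case split.)
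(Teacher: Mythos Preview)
Your argument is correct and is essentially the paper's own proof: both show that any bipartite graph on $n$ vertices has at most $n^2/4$ edges, so $|\mathbf{d}|=2|E(G)|\le n^2/2$. The only cosmetic difference is that the paper invokes Mantel's theorem as a black box, whereas you supply the direct $pq\le n^2/4$ bound yourself.
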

\begin{proof}
Based on Mantel's theorem \cite{Mantel1907}, any simple undirected bipartite graph on $n$ vertices has at most
$\frac{n^2}{4}$ edges. So the degree sum
cannot exceed $\frac{n^2}{2}$ for any $\mathbf{d}$ that is potentially bipartite.
\end{proof}

\begin{thm}
	\label{thm_Small_Degrees}
	If $d_1+d_{n+1-d_1}>n$, then $\mathbf{d}$ is not potentially bipartite. 
\end{thm}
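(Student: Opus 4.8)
The plan is to establish the contrapositive: if $\mathbf{d}$ has a bipartite realization, then $d_1 + d_{n+1-d_1} \le n$. Suppose $G$ is such a realization, with bipartition $(A,B)$, and let $v$ be a vertex of $G$ of maximum degree $d_1$. If $d_1 = 0$ the hypothesis $d_1 + d_{n+1-d_1} > n$ fails outright, so assume $d_1 \ge 1$; and we may take $v \in A$ by the symmetry of the two sides, which is also in line with the convention adopted above that at least half of the largest terms of $\mathbf{d}$ appear on the left side $\mathbf{a}$. Since all $d_1$ neighbors of $v$ lie in $B$, we have $|B| \ge d_1$, hence $|A| = n - |B| \le n - d_1$.

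The next step is to push this cardinality bound down to the degree sequence. Every vertex of $B$ has all of its neighbors in $A$, so each such vertex has degree at most $|A| \le n - d_1$. Therefore $G$ has at least $|B| \ge d_1$ vertices of degree at most $n - d_1$. Listing the degrees in nonincreasing order, this means the $d_1$ smallest entries are all at most $n - d_1$; the largest of those $d_1$ smallest entries is precisely $d_{n+1-d_1}$, so $d_{n+1-d_1} \le n - d_1$, i.e. $d_1 + d_{n+1-d_1} \le n$. This is exactly the contrapositive of the claim.

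I expect no serious obstacle. The only point that warrants a careful sentence is the counting in the second step: the at most $n - d_1$ vertices whose degree exceeds $n - d_1$ can occupy only the first $n - |B| \le n - d_1$ positions of the sorted sequence, and this is what forces the entry in position $n+1-d_1$ to be at most $n - d_1$. It is also worth recording that the index $n+1-d_1$ is always at least $2$, since $d_1 \le n-1$ in any simple graph, so $d_{n+1-d_1}$ is a well-defined term of $\mathbf{d}$.
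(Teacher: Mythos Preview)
Your argument is correct and matches the paper's own proof essentially line for line: place a vertex of degree $d_1$ on one side, deduce the other side has at least $d_1$ vertices and the first side at most $n-d_1$, conclude that at least $d_1$ degrees in $\mathbf{d}$ are $\le n-d_1$, and read off $d_{n+1-d_1}\le n-d_1$. The only differences are your added remarks on the $d_1=0$ edge case and the well-definedness of the index $n+1-d_1$, which are harmless elaborations.
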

\begin{proof}
Suppose $\mathbf{d}$ is potentially bipartite.
The left partite set contains a vertex $v_1$ of degree $d_1$ so the right partite set contains at least $d_1$ vertices ($v_1's$
neighbors), each of which has a degree at most $n-d_1$ since the left partite set has at most $n-d_1$ vertices.
Consequently, $\mathbf{d}$ must contain at least $d_1$ degrees that are $\le n-d_1$. Therefore, $d_{n+1-d_1}$
must be $\le n-d_1$ for $\mathbf{d}$ to be potentially bipartite.
\end{proof}

\begin{thm}
	\label{thm_Large_Degrees}
	If $\sum_{i=1}^{n-d_1} d_i<\frac{|\mathbf{d}|}{2}$, then $\mathbf{d}$ is not potentially bipartite. 
\end{thm}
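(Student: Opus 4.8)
The plan is to argue by contraposition, reusing the structural observation already exploited in the proof of Restriction \ref{thm_Small_Degrees}. Suppose $\mathbf{d}$ is potentially bipartite and fix a bipartite realization $G$ with partite sets $L$ and $R$, where $L$ denotes the side containing a vertex $v_1$ of maximum degree $d_1$. All $d_1$ neighbours of $v_1$ lie in $R$, so $|R|\ge d_1$, and hence $|L| = n - |R| \le n - d_1$.

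Next I would invoke the defining property of a bipartite graph: every edge has exactly one endpoint in $L$, so the sum of the degrees of the vertices of $L$ equals $|E(G)| = |\mathbf{d}|/2$. On the other hand, the degrees of the vertices in $L$ form a sub-multiset of $\mathbf{d}$ of size $|L|$, and since $\mathbf{d}$ is sorted non-increasingly and is zero-free, this sum is at most the sum of the $|L|$ largest entries of $\mathbf{d}$, namely $\sum_{i=1}^{|L|} d_i \le \sum_{i=1}^{n-d_1} d_i$. Chaining the two facts gives $\sum_{i=1}^{n-d_1} d_i \ge |\mathbf{d}|/2$, whose contrapositive is precisely the statement of the restriction.

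I do not expect a genuine obstacle here; the argument is a two-line consequence of the bipartite degree-sum identity together with the cardinality bound $|L|\le n-d_1$. The only points deserving a word of care are that $|L|\le n-d_1$ is exactly the inequality implicit in Restriction \ref{thm_Small_Degrees}, and that enlarging the sum from $|L|$ terms to $n-d_1$ terms is harmless because $\mathbf{d}$ is zero-free (all the extra terms are positive). If one wanted to extract more from the same idea one could also bound the degree sum of $R$ from below and combine the two inequalities, but for the inequality as stated the one-sided estimate above is enough.
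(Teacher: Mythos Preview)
Your argument is correct and follows essentially the same route as the paper: bound the size of the partite set containing a maximum-degree vertex by $n-d_1$, then observe that its degree sum is $|\mathbf{d}|/2$ yet cannot exceed the sum of the $n-d_1$ largest terms of $\mathbf{d}$. The only cosmetic slip is that the result you cite is labeled a \emph{Rule} in the paper, not a \emph{Restriction}.
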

\begin{proof}
As mentioned in the proof of Rule \ref{thm_Small_Degrees}, the left partite set has at most $n-d_1$ vertices.
Clearly, the degree sum $|\mathbf{a}|$ of the left side $\mathbf{a}$ is impossible
to exceed $\sum_{i=1}^{n-d_1} d_i$. Therefore, $\sum_{i=1}^{n-d_1} d_i$ must be at least
$\frac{|\mathbf{d}|}{2}$ for $\mathbf{d}$ to be potentially bipartite.
\end{proof}

\begin{thm}
	\label{thm_Fixed_Degrees}
	If $\sum_{d_i>n-d_1} d_i>\frac{|\mathbf{d}|}{2}$, then $\mathbf{d}$ is not potentially bipartite. 
\end{thm}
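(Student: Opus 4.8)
The plan is to push a little further the reasoning already used in Rules \ref{thm_Small_Degrees} and \ref{thm_Large_Degrees}. Suppose, for contradiction, that $\mathbf{d}$ is potentially bipartite, and fix a bipartite realization together with the convention (adopted throughout this section) that a vertex $v_1$ of maximum degree $d_1$ lies in the left partite set, whose degree sequence is $\mathbf{a}$. Since $v_1$ has $d_1$ neighbors, all lying in the right partite set $\mathbf{b}$, the right side has at least $d_1$ vertices, so the left side has at most $n-d_1$ vertices.

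The key step is to observe that this constraint forces every vertex of degree exceeding $n-d_1$ onto the left side: a vertex lying on the right has all of its neighbors on the left, hence degree at most $n-d_1$. Therefore every term $d_i$ with $d_i>n-d_1$ belongs to $\mathbf{a}$, and the degree sum $|\mathbf{a}|$ of the left side is at least $\sum_{d_i>n-d_1} d_i$.

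Finally, in any candidate bipartition we have $|\mathbf{a}|=|\mathbf{d}|/2$, so $\sum_{d_i>n-d_1} d_i\le |\mathbf{d}|/2$ is necessary for $\mathbf{d}$ to be potentially bipartite; this contradicts the hypothesis $\sum_{d_i>n-d_1} d_i>|\mathbf{d}|/2$. There is essentially no obstacle in this argument; the only thing to be mildly careful about is the tie-breaking convention placing a maximum-degree vertex on the left, which is exactly the convention used for the previous rules and is harmless because the two partite sets are interchangeable.
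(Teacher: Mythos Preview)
Your argument is correct and is essentially the same as the paper's: show that the right side has only vertices of degree at most $n-d_1$, hence all terms exceeding $n-d_1$ lie in $\mathbf{a}$, and then compare their sum with $|\mathbf{a}|=|\mathbf{d}|/2$. The only difference is that you frame it as a proof by contradiction and spell out the convention about placing a maximum-degree vertex on the left, which the paper leaves implicit.
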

\begin{proof}
As shown in the proof of Rule \ref{thm_Small_Degrees}, each of the right side degrees in $\mathbf{b}$ is
at most $n-d_1$. Therefore, every degree larger than $n-d_1$ must be in the left side $\mathbf{a}$ and the sum of such
degrees should not exceed $\frac{|\mathbf{d}|}{2}$ for any $\mathbf{d}$ that is potentially bipartite.
\end{proof}

For the following rule, we will need the concept of \textit{residue} of a finite simple undirected graph $G$
or a graphical degree sequence $\mathbf{d}$ introduced in Favaron et al. \cite{Favaron1991} and we use $R(G)$
and $R(\mathbf{d})$ as notations. We also use $\overline{\mathbf{d}}$ to denote the complementary 
graphical degree sequence of $\mathbf{d}$: $(n-1-d_n\ge n-1-d_{n-1}\ge \cdots \ge n-1-d_1)$, which is the degree
sequence of the complementary graph of any realization of $\mathbf{d}$.

\begin{thm}
	\label{thm_Residue}
	If $R(\overline{\mathbf{d}})\ge 3$, then $\mathbf{d}$ is not potentially bipartite. 
\end{thm}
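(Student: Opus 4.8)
The plan is to prove the contrapositive and to lean on two ingredients: (i) the theorem of Favaron et al.\ \cite{Favaron1991} that the residue of a graphical degree sequence is a lower bound on the independence number of \emph{every} realization of that sequence, and (ii) the elementary fact that complementation interchanges independent sets and cliques, so that the complement of a bipartite graph has independence number at most $2$. Concretely, I would assume $\mathbf{d}$ is potentially bipartite and deduce $R(\overline{\mathbf{d}})\le 2$, which is the negation of the hypothesis.

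First I would fix a bipartite realization $G$ of $\mathbf{d}$; then the complementary graph $\overline{G}$ is a realization of the complementary sequence $\overline{\mathbf{d}}$. A set of vertices is independent in $\overline{G}$ precisely when it induces a clique in $G$, and since $G$ is bipartite it is triangle-free, so $\omega(G)\le 2$ and hence $\alpha(\overline{G})=\omega(G)\le 2$. Next I would invoke ingredient (i) with the sequence $\overline{\mathbf{d}}$ and its realization $\overline{G}$: since $R(\overline{\mathbf{d}})\le \alpha(H)$ for \emph{every} realization $H$ of $\overline{\mathbf{d}}$, in particular $R(\overline{\mathbf{d}})\le\alpha(\overline{G})\le 2$. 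This contradicts the assumption $R(\overline{\mathbf{d}})\ge 3$, so $\mathbf{d}$ cannot be potentially bipartite.

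I do not expect a real obstacle here; the argument is essentially a two-line deduction once the right facts are lined up. The only points that warrant care are the bookkeeping ones: recalling that the residue $R(\cdot)$ is a function of the degree sequence alone (it is simply read off from the iterated Havel--Hakimi / Kleitman--Wang reduction), that $R(G)=R(\mathbf{d}(G))$ is therefore the same for all realizations, and that the Favaron et al.\ inequality is uniform over realizations, so that applying it to the specific realization $\overline{G}$ of $\overline{\mathbf{d}}$ is legitimate. With those understood, the chain $R(\overline{\mathbf{d}})\le\alpha(\overline{G})=\omega(G)\le 2$ finishes the proof.
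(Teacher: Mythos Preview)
Your argument is correct and is essentially the same as the paper's: both use the Favaron et al.\ bound $R(\overline{\mathbf{d}})\le\alpha(\overline{G})=\omega(G)$ together with the fact that bipartite graphs are triangle-free. The only cosmetic difference is that you phrase it as a contrapositive, while the paper argues directly that $R(\overline{\mathbf{d}})\ge 3$ forces every realization of $\mathbf{d}$ to contain a triangle.
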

\begin{proof}
As proved in \cite{Favaron1991}, the residue $R(\mathbf{d})$ of a graphical degree sequence $\mathbf{d}$ is a lower bound
on the independence number of any realization of $\mathbf{d}$. Then clearly $R(\overline{\mathbf{d}})$ is a lower bound
on the clique number of any realization of $\mathbf{d}$. The result follows because any graph with a clique of size
at least 3 is not bipartite.
\end{proof}

The following is a similar rule that uses the concept of Murphy's bound introduced in Murphy \cite{MURPHY1991},
denoted $\beta(G)$ or $\beta(\mathbf{d})$ here, which is also a lower bound on the independence number of any
realization of $\mathbf{d}$.

\begin{thm}
	\label{thm_Murphy}
	If $\beta(\overline{\mathbf{d}})\ge 3$, then $\mathbf{d}$ is not potentially bipartite. 
\end{thm}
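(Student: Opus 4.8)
The plan is to mirror the structure of the proof of Rule~\ref{thm_Residue} almost verbatim, substituting Murphy's bound $\beta$ for the residue $R$. The only fact I need about $\beta$ is the one quoted in the text immediately preceding the statement: for any graphical degree sequence $\mathbf{d}$, the quantity $\beta(\mathbf{d})$ is a lower bound on the independence number $\alpha(G)$ of every realization $G$ of $\mathbf{d}$; this is the content of Murphy~\cite{MURPHY1991}. I will take this as given.

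First I would observe that $\beta(\overline{\mathbf{d}})$ is a lower bound on the clique number of any realization of $\mathbf{d}$. Indeed, if $G$ is any realization of $\mathbf{d}$, then its complement $\overline{G}$ is a realization of $\overline{\mathbf{d}}$, so by Murphy's bound applied to $\overline{\mathbf{d}}$ we get $\alpha(\overline{G}) \ge \beta(\overline{\mathbf{d}})$; and $\alpha(\overline{G}) = \omega(G)$ since independent sets in $\overline{G}$ are exactly cliques in $G$. Hence $\omega(G) \ge \beta(\overline{\mathbf{d}})$ for every realization $G$ of $\mathbf{d}$.

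Next I would close the argument exactly as in Rule~\ref{thm_Residue}: if $\beta(\overline{\mathbf{d}}) \ge 3$, then every realization $G$ of $\mathbf{d}$ contains a clique of size at least $3$, i.e.\ a triangle, and a graph containing a triangle cannot be bipartite. Therefore $\mathbf{d}$ has no bipartite realization, i.e.\ it is not potentially bipartite.

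There is essentially no obstacle here; the proof is a routine transfer of the complementation trick used for the residue rule. The only thing one has to be slightly careful about is the edge case where a realization might be disconnected or where $\overline{\mathbf{d}}$ has zero entries — but Murphy's bound is stated for arbitrary graphical degree sequences and the complementation identity $\alpha(\overline{G}) = \omega(G)$ holds unconditionally, so no special handling is needed. I would also note, as a remark, that this rule is genuinely distinct from Rule~\ref{thm_Residue} only when $\beta(\overline{\mathbf{d}})$ and $R(\overline{\mathbf{d}})$ differ; since neither of $R$ and $\beta$ dominates the other in general, it is worth keeping both.
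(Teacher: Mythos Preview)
Your proposal is correct and follows exactly the approach the paper intends: the paper gives no separate proof for Rule~\ref{thm_Murphy} but introduces it as ``a similar rule'' to Rule~\ref{thm_Residue}, and your argument is precisely the verbatim transfer of that proof with Murphy's bound $\beta$ in place of the residue $R$. The extra remarks you make about edge cases and about $R$ and $\beta$ being incomparable are accurate and harmless additions.
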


If the input $\mathbf{d}$ passes the tests of all of the above seven rules and cannot be resolved as a ``no''
instance, then our algorithm will enter the enumeration phase. From Rule \ref{thm_Fixed_Degrees}
we know that by now we must have $\frac{|\mathbf{d}|}{2}\ge \sum_{d_i>n-d_1} d_i$.
In the special case that equality holds, which means the left side $\mathbf{a}$ must
contain exactly those degrees that are larger than $n-d_1$ should $\mathbf{d}$ be potentially bipartite, our algorithm
can immediately stop based on the result of the Galy-Ryser conditional test on this candidate bipartition of $\mathbf{d}$.
Otherwise, our algorithm continues with $S=\frac{|\mathbf{d}|}{2}- \sum_{d_i>n-d_1} d_i>0$, which is the sum of the additional
degrees that need to be in the left side $\mathbf{a}$ besides those that are larger than $n-d_1$.
For convenience, we use $\mathbf{a_f}$
to denote the subsequence of $\mathbf{d}$ consisting of those degrees that are larger than $n-d_1$. Note that
$\mathbf{a_f}$ is an empty sequence when $n\ge 2d_1$.

The second phase will then enumerate candidate bipartitions of $\mathbf{d}$ into $(\mathbf{a},\mathbf{b})$
by specifying which degrees will be in the left side $\mathbf{a}$, which also automatically specifies
$\mathbf{b}=\mathbf{d}-\mathbf{a}$. As we already know, we need to choose a
subsequence of $\mathbf{d}-\mathbf{a_f}$ (i.e. from those degrees in $\mathbf{d}$ that are at most $n-d_1$) with sum $S$
and concatenate $\mathbf{a_f}$ with this subsequence of degrees to form $\mathbf{a}$ based on the above discussion.
Several restrictions regarding $\ell(\mathbf{a})$ can be put on the left side $\mathbf{a}$ for the candidate bipartitions
$(\mathbf{a},\mathbf{b})$ to possibly satisfy
the Galy-Ryser conditional test so that our algorithm will enumerate as few candidate bipartitions as possible.

\begin{thm2}
	\label{Restriction_Upper1}
	The number of degrees $\ell(\mathbf{a})$ in the left side $\mathbf{a}$ cannot exceed $n-d_1$.
This is because the right side $\mathbf{b}$ contains at least $d_1$ degrees.
\end{thm2}

\begin{thm2}
	\label{Restriction_Upper2}
	Let $l_1$ be the maximum number of degrees in $\mathbf{d}$ with sum at most $\frac{|\mathbf{d}|}{2}$.
Then the number of degrees $\ell(\mathbf{a})$ in the
left side $\mathbf{a}$ cannot exceed $l_1$. This is because the degrees in the left side $\mathbf{a}$ must have
sum $\frac{|\mathbf{d}|}{2}$.
\end{thm2}

\begin{thm2}
	\label{Restriction_Lower1}
	Let $d_m$ be the minimum largest degree in any subsequence of $\mathbf{d}$ with sum at least
$\frac{|\mathbf{d}|}{2}$. Then the number of degrees $\ell(\mathbf{a})$ in the left side $\mathbf{a}$ must be at least
$d_m$. This is because the largest degree in the right side $\mathbf{b}$ must be at least $d_m$ and the conjugate
of $\mathbf{a}$ should dominate $\mathbf{b}$.
\end{thm2}

\begin{thm2}
	\label{Restriction_Lower2}
	Let $l_2$ be the minimum number of degrees in $\mathbf{d}$ with sum at least $\frac{|\mathbf{d}|}{2}$.
Then the number of degrees $\ell(\mathbf{a})$ in the
left side $\mathbf{a}$ must be at least $l_2$. The reason is similar to that for Restriction \ref{Restriction_Upper2}.
\end{thm2}

It's not hard to see that $d_m$, $l_1$ and $l_2$ can all be easily calculated with greedy algorithms.
The above discussion shows we can enumerate all subsequences $\mathbf{a}$ of $\mathbf{d}$ that
satisfies the following three requirements:
\begin{enumerate}
	\item it includes all degrees in $\mathbf{a_f}$ (i.e. those degrees in $\mathbf{d}$ that are greater than $n-d_1$).
	\item it has sum $|\mathbf{a}|=\frac{|\mathbf{d}|}{2}$.
	\item its number of degrees $\ell(\mathbf{a})$ should satisfy $\max\{d_m,l_2\}\le \ell(\mathbf{a})\le\min\{n-d_1,l_1\}$.
\end{enumerate}
%
%

In order to find a successful (i.e. satisfying the Gale-Ryser condition)
candidate bipartition $(\mathbf{a},\mathbf{b})$ of $\mathbf{d}$, our intuition is to
include a suitable number of large degrees from $\mathbf{d}-\mathbf{a_f}$ and as many small degrees of
$\mathbf{d}-\mathbf{a_f}$ as possible into $\mathbf{a}$ without violating requirement 3 mentioned above. In this way
$\mathbf{b}=\mathbf{d}-\mathbf{a}$ will not include many of the largest degrees in $\mathbf{d}$ while $\mathbf{a}$
will still include enough number of degrees, which makes it more likely for the conjugate
of $\mathbf{a}$ to dominate $\mathbf{b}$.

Following this intuition we calculate a maximum index $x_0\in\{1,2,\cdots,n\}$ such that $\mathbf{a}$ cannot include all
$\{d_1,d_2,\cdots,d_{x_0},d_{x_0+1}\}$ in order for its conjugate to dominate $\mathbf{b}$. This index $x_0$ can
be easily calculated as follows. Starting from $x=1$, if for some $x$, when
we include all $\{d_1,d_2,\cdots,d_{x}\}$ in $\mathbf{a}$ and include from $\mathbf{d}-\{d_1,d_2,\cdots,d_{x}\}$
as many smallest degrees as possible into $\mathbf{a}$ while still maintaining the correct sum
$|\mathbf{a}|=\frac{|\mathbf{d}|}{2}$, and when the number of degrees $\ell(\mathbf{a})$ in $\mathbf{a}$ starts
to fall below $\max\{d_m,l_2\}$, then $x_0$ can be chosen to be $x-1$.

After $x_0$ has been calculated, we will try to find out
if we can include a subsequence $\mathbf{d_S}$ of $\{d_1,d_2,\cdots,d_{x_0}\}$ into $\mathbf{a}$ together with some
degrees in $\mathbf{d}-\mathbf{d_S}$ such that the conjugate of $\mathbf{a}$ dominates $\mathbf{b}$. Without loss of
generality, this subsequence $\mathbf{d_S}$ can be chosen to be the largest terms of $\{d_1,d_2,\cdots,d_{x_0}\}$.
Or, equivalently, we can remove the smallest terms from $\{d_1,d_2,\cdots,d_{x_0}\}$ one at a time to get these subsequences.
For each such subsequence $\mathbf{d_S}=\{d_1,d_2,\cdots,d_{x}\}$, where $\ell(\mathbf{a_f})\le x\le x_0$ since
$\mathbf{d_S}$ necessarily includes all degrees in $\mathbf{a_f}$ according to the above discussion, we perform
the following two enumerative steps to fully construct $\mathbf{a}$:
\begin{enumerate}
\item starting from the largest possible,
choose some degree $d_y$ from $\mathbf{d}-\mathbf{d_{S'}}$ and include some copies of $d_y$ into $\mathbf{a}$.
We also stipulate that no degree larger than $d_y$ from $\mathbf{d}-\mathbf{d_{S'}}$ will be included into $\mathbf{a}$.
Here $\mathbf{d_{S'}}$ is defined as follows. If $\mathbf{d_S}$ includes all copies of $d_{x}$ from $\mathbf{d}$, then
$\mathbf{d_{S'}}$ includes $\mathbf{d_S}$ together with all copies of the degree from $\mathbf{d}$ which is immediately
smaller than $d_{x}$. If $\mathbf{d_S}$ does not include all copies of $d_{x}$ from $\mathbf{d}$, then
$\mathbf{d_{S'}}$ includes $\mathbf{d_S}$ together with all the remaining copies of $d_{x}$ from $\mathbf{d}$.
The motivation for such a definition is that we don't want $d_y$ to equal a degree we have just excluded from a previous
consideration of $\mathbf{d_S}$ when $x$ is being reduced starting from $x_0$.

\item include some small terms that are all less than $d_y$ from $\mathbf{d}-\mathbf{d_{S'}-\mathbf{d_y}}$ into $\mathbf{a}$,
where $\mathbf{d_y}$ is the subsequence of $\mathbf{d}$ consisting of all copies of $d_y$. We can generate
a number of possible combinations of small terms with each combination summing to a suitable
value based on the choice of $\mathbf{d_S}$ and the choice in the enumerative step (1) and having a suitable
number of terms so that $\ell(\mathbf{a})$ satisfies the inequality in the above requirement 3.
An appropriate procedure can be designed for this purpose such that combinations with more smaller terms are
generated first and each combination can be generated in $O(n)$ time.
\end{enumerate}

Note that both of these steps are enumerative steps. Step (1) must be exhaustive by trying each possible distinct $d_y$
from $\mathbf{d}-\mathbf{d_{S'}}$ and each of the possible number of copies up to its multiplicity in $\mathbf{d}$.
Step (2) can be non-exhaustive, which means we can impose a limit $l_c$ on the number of possible
combinations of small terms to be included into $\mathbf{a}$. This parameter $l_c$ is the place where our algorithm is
customizable and in reality we can choose $l_c$ to be a constant or a low degree polynomial of $n$.
This non-exhaustive enumeration step does open the possibility of our algorithm making an error on some ``yes'' input
instances if the specified limit $l_c$ will cause our algorithm to skip some of the possible
combinations. However, this step will not introduce any error on ``no'' input instances. We also note that some of
the choices in these two steps can be pruned during the enumerative process to speed up the enumeration phase
when they will cause $\ell(\mathbf{a})$ to fail to satisfy the inequality in the above requirement 3. In fact, the lower
bound on $\ell(\mathbf{a})$ can be improved during the process as $x$ is being reduced so that the minimum
largest degree in $\mathbf{b}$ increases.

The reader may have noticed that these enumerative steps are more sophisticated and complicated than the simple
naive scheme of enumerating all possible subsequences of $\mathbf{d}-\mathbf{a_f}$ with sum $S$. We will discuss
several alternative enumeration schemes later in Section \ref{sec:diss}.
The presented enumeration scheme here is the fastest we found through experiments.

During the enumeration phase, the algorithm will stop and output ``yes'' if a successful candidate bipartition
$(\mathbf{a},\mathbf{b})$ is found. Otherwise, it will stop enumeration and output ``no''
when the subsequence $\mathbf{d_S}$ becomes shorter than
$\mathbf{a_f}$, or, in the case that $\mathbf{a_f}$ is empty, when $\mathbf{d_S}$ includes less than half
of the largest degrees $d_1$ from $\mathbf{d}$.

We note that the enumeration phase can be easily parallelized
with respect to the different choices of $\mathbf{d_S}$. However, it may not be worth it given the good run time
performance of the serial version unless the input is long and hard (say $n=\ell(\mathbf{d})>500$).
See the following sections for run time complexity analysis and experimental evaluations.

\section{Analysis of Run Time Complexity}
\label{sec:complexity}
The seven rules in the first phase can all be checked in polynomial time. It can be easily verified that the total
running time of these rules is $O(n^3)$.

In the second phase, the three quantities $d_m$, $l_1$ and $l_2$ can all be
computed in $O(n)$ time. The maximum index $x_0$ can be calculated in $O(n^2)$ time.
The number of choices for $\mathbf{d_S}$ is $O(n)$. For each choice of $\mathbf{d_S}$, the number of choices for $d_y$
and its number of copies to be included in $\mathbf{a}$ in the enumerative step (1) is $O(n)$.
The maximum number $l_c$ of combinations of the remaining small terms to be included
in $\mathbf{a}$ in the enumerative step (2) can be chosen to be $O(1)$, $O(n)$, etc. Each combination
can be generated in $O(n)$ time.
Whenever a full left side $\mathbf{a}$ has been constructed, the
Galy-Ryser conditional test on the candidate bipartition $(\mathbf{a},\mathbf{b})$ can be performed in $O(n)$ time.
Overall, we can see that the second phase runs in $O(n^5)$ time when $l_c$ is $O(n)$.
Note this run time is achieved at the expense of the algorithm
possibly producing an erroneous output on some ``yes'' instances. However, the observed error rate is so low that we
consider the limit on $l_c$ worthwhile. On the other hand, if no limit is placed on $l_c$, then our algorithm will always
produce a correct output, at the expense of possibly running in exponential time in the worst case.

In summary, our algorithm can be customized to run in polynomial time with satisfactory low error rates (see Section
\ref{sec:exp} for some evidence of error rates).
Also note that it is a deterministic instead of a randomized algorithm.

\section{Experiments}
\label{sec:exp}
We mainly tested our implementation of the decision algorithm with the parameter $l_c$ customized as $l_c\le n$.
We first show the low error rates of the algorithm and then show the good run time performance.
\subsection{Error Rates}
We first demonstrate the somewhat surprising power of the seven rules in the first phase.
In Table \ref{tbl:results_rule_effective} we show the number
$r(n)$ of all zero-free graphical degree sequences of length $n$ that can be resolved by one of these rules and
their proportion among all $D(n)$ zero-free graphical degree sequences of length $n$.
Based on the description of the rules, these $r(n)$ instances are all ``no'' instances. The function values $r(n)$ are
obtained through a program that incorporates our decision algorithm into the algorithm to enumerate all degree
sequences of a certain length from Ruskey et al. \cite{Ruskey1994}. Let $B(n)$ be the number
of zero-free potentially bipartite graphical degree sequences of length $n$. Clearly $B(n)\le D(n)-r(n)$ since some
of the ``no'' instances are resolved in the second phase. It looks safe to conclude from this table that
$\frac{r(n)}{D(n)}$ tends to 1 as $n$ grows towards infinity and so
$\frac{B(n)}{D(n)}$ tends to 0. Note that these are just empirical observations. Rigorous proofs of the asymptotic
orders of these functions or their relative orders might require advanced techniques \cite{Wang2019}.

In fact, those instances that can be resolved by one of the seven rules are not the only ones that can avoid the
enumeration phase of our algorithm. For example, those instances that have $S=\frac{|\mathbf{d}|}{2}-|\mathbf{a_f}|=0$
can also be resolved immediately following the tests of the rules according to our description in Section \ref{sec:code}.
\begin{table}[!ht]
	\centering
	\caption{The number $D(n)$ of zero-free graphical degree sequences of length $n$, and the number $r(n)$ of them that can be resolved by one of the seven rules.}
	\begin{tabular}{||c|c|c|c||}
		\hline\hline
		$n$&$D(n)$ & $r(n)$ & $\frac{r(n)}{D(n)}$\\
		\hline\hline
		6 & 71 & 53 & 0.746479\\
		\hline
		7 & 240 & 203 & 0.845833\\
		\hline
		8 & 871 & 770 & 0.884041\\
		\hline
		9 & 3148 & 2902 & 0.921855\\
		\hline
		10 & 11655 & 10995 & 0.943372\\
		\hline
		11 & 43332 & 41603 & 0.960099 \\
		\hline
		12 & 162769 & 158074 & 0.971155 \\
		\hline
		13 & 614198 & 601556 & 0.979417 \\
		\hline
		14 & 2330537 & 2295935 & 0.985153 \\
		\hline
		15 & 8875768 & 8780992 & 0.989322 \\
		\hline
		16 & 33924859 & 33663505 & 0.992296 \\
		\hline
		17 & 130038230 & 129315300 & 0.994441 \\
		\hline
		18 & 499753855 & 497745844 &0.995982\\
		\hline
		19 & 1924912894 & 1919319963 & 0.997094\\
		\hline
		20 & 7429160296 & 7413535855 & 0.997897\\
		\hline
		21 & 28723877732 & 28680124185 & 0.998477\\
		\hline
		22 & 111236423288 & 111113621955 & 0.998896\\
		\hline
		23 & 431403470222 & 431058118392 & 0.999199\\
		\hline\hline
	\end{tabular}
	\label{tbl:results_rule_effective}
\end{table}

Next we demonstrate the low error rates of our algorithm. In Table \ref{tbl:error_rate}
we show the number $B_w(n)$ of all zero-free potentially bipartite graphical degree sequences
of length $n$ that will be incorrectly reported as a ``no'' instance if we set $l_c=1$ and their proportion
among all $B(n)$ zero-free potentially bipartite graphical degree sequences of length $n$. Even with the
smallest possible $l_c$, our algorithm makes very few errors on the ``yes'' instances. In fact, if we set $l_c=n$, then
our algorithm makes no error on all zero-free graphical degree sequences of length $n\le 23$. However,
the observed trend is that the limit $l_c$ need to grow with $n$ for our algorithm to always make
no error. We are unable to prove whether there is any polynomial of $n$ to bound $l_c$ such that our
algorithm can always give correct outputs or the error rate is always below some constant.
If $l_c$ grows faster than a polynomial of $n$, then our algorithm could run more than polynomial
time in the worst case. In our experiments we did not find any ``yes'' instance of length $n\ge 50$ that
will be misclassified by our algorithm under the setting of $l_c=n$.

\begin{table}[!ht]
	\centering
	\caption{The number $B(n)$ of zero-free potentially bipartite graphical degree sequences of length $n$, and the number $B_w(n)$ of them that will be misclassified as ``no'' instances if $l_c$ is set to 1.}
	\begin{tabular}{||c|c|c|c||}
		\hline\hline
		$n$&$B(n)$ & $B_w(n)$ & $\frac{B_w(n)}{B(n)}$ (with $l_c=1$)\\
		\hline\hline
		6 & 18 & 0 & 0 \\
		\hline
		7 & 37 & 0 & 0\\
		\hline
		8 & 100 & 0 & 0\\
		\hline
		9 & 241 & 0 & 0\\
		\hline
		10 & 640 & 0 & 0\\
		\hline
		11 & 1639 & 0 & 0 \\
		\hline
		12 & 4378 & 0 & 0 \\
		\hline
		13 & 11601 & 2 & 0.000172399 \\
		\hline
		14 & 31318 & 8 & 0.000255444 \\
		\hline
		15 & 84642 & 32 & 0.000378063 \\
		\hline
		16 & 230789 & 117 & 0.000506957 \\
		\hline
		17 & 631159 & 482 & 0.000763674 \\
		\hline
		18 & 1736329  & 1667 & 0.000960072\\
		\hline
		19 & 4790928 & 6107 & 0.0012747\\
		\hline
		20 & 13272233 & 20826 & 0.00156914\\
		\hline
		21 & 36869887 & 72879 & 0.00197665\\
		\hline
		22 & 102727688 & 244266 & 0.0023778\\
		\hline
		23 & 286893582 & 821331 & 0.00286284\\
		\hline\hline
	\end{tabular}
	\label{tbl:error_rate}
\end{table}
We note that the error rates reported in Table \ref{tbl:error_rate} is with respect to the $B(n)$
``yes'' instances. The error rate will be much lower if they are computed with respect to all $D(n)$ instances of
length $n$ because, as we know from Table \ref{tbl:results_rule_effective}, by far the majority of the ``no'' instances
have already been correctly detected by the seven rules. For example, $\frac{B_w(23)}{D(23)}=1.9\times 10^{-6}$
at the setting of $l_c=1$. Plus, increasing $l_c$ from $O(1)$ to $O(n)$ also further reduces the error rate.
For example, $\frac{B_w(23)}{D(23)}=0$ at the setting of $l_c=23$.

\subsection{Run Time Performance}
We now demonstrate the run time performance of our algorithm with the setting of $l_c=n$. Here the reported
run times were obtained through a C++ implementation tested under typical Linux workstations. We have already shown
in Section \ref{sec:complexity} that our algorithm runs in polynomial time if $l_c$ is bounded by a polynomial of $n$.
We generated random graphical degree sequences of specified length $n$, largest term $d_1$ and smallest term $d_n$.
For a wide range of $n\le 500$, we found that the hardest instances for our algorithm are approximately in the range
of $0.5n\le d_1\le 0.6n$ and $1\le d_n\le 0.1n$. The instances in these ranges are the most likely to cause our
algorithm to enter the enumeration phase. However, even the hardest instances we tested for $n\le 500$ can be finished
in about a couple of minutes, which are necessarily those ``no'' instances that will go through the entire enumeration
phase without any successful candidate bipartition being found.
All the tested instances that are decided in the first phase can be finished almost instantly. All of the tested
``yes'' instances detected in the enumeration phase can be decided in at most tens of seconds due to the empirical
fact that most of the ``yes'' instances have a successful candidate bipartition
that can be found even when $l_c$ is set to 1.

\section{Discussions}
\label{sec:diss}
We mentioned in Section \ref{sec:code} that our algorithm is customizable through the limit $l_c$ in the enumerative
step (2). In this section we describe several alternatives to the enumeration phase.

In the enumerative step (1) we have chosen $d_y$ from largest to smallest. Instead, we can choose $d_y$ from
smallest to largest. On average, we found that the former has better run time performance.

In the enumerative step (2) we prefer to enumerate the combinations of smallest terms first.
Instead, we can choose to enumerate those of
largest terms first. On average, we still found that the former has better run time performance.

The enumerative steps (1) and (2) can even be combined into one step to make the enumeration phase simpler.
That is, we can exhaustively enumerate all possible combinations of terms from $\mathbf{d}-\mathbf{d_{S'}}$ with
an appropriate sum subject to the requirement 3 about the number of terms $\ell(\mathbf{a})$ in $\mathbf{a}$.
(Or, to make it more
naive, we could exhaustively enumerate all possible combinations of terms from $\mathbf{d}-\mathbf{a_f}$ with
the sum $S$.) With these schemes we still face the choice of enumerating
largest terms first or smallest terms first. On average, the choice of ``smallest terms first'' still enjoys better run time
performance. However, in order to achieve similar low error rates in these alternative schemes with this choice
of ``smallest terms first,'' the limit on the number of combinations to be
generated will usually have to be much larger than the chosen limit
$l_c$ in our design in Section \ref{sec:code}, causing these alternatives to have much worse run time performance on
those instances that require the second phase to decide.
If no limit is placed on the number of combinations to be generated, these alternatives will all produce correct outputs
always. Nevertheless, the run time performance could become terrible. For example, for some hard instances
with length $n$ from 100 to 300, it could take days to detect a successful candidate
bipartition for ``yes'' instances and tens of days
to decide for ``no'' instances when unlimited $l_c$ is chosen, a clear evidence of exponential run time behavior.
For longer hard instances in the range $300\le n\le 500$,
these more naive enumeration phases with unlimited $l_c$ might take years or longer time to finish.

As mentioned before, our algorithm always gives the correct conclusion for ``no'' instances. But it could give an incorrect
output for some ``yes'' instances depending on the limit $l_c$ set in the enumeration phase.
This kind of behavior can be contrasted with some
randomized algorithms. The error our algorithm might make is \textit{fixed} and it comes from
the fact that not all potentially bipartite graphical degree sequences exhibit the kind of pattern that can be captured
by the particular ``limited'' search process of our algorithm. Simply put, our algorithm is deterministic.
If it makes an error on an input under a particular setting of $l_c$, it always makes an error on that input with that setting.
If a randomized algorithm makes an error on an input, then it could produce a correct output the next time it runs.

Now we comment on the complexity of the decision problem of potentially bipartiteness of graphical degree sequences.
It is obviously in $NP$. We don't know whether it is in co-$NP$ or in $P$, nor do we know whether it is $NP$-complete.
Whenever our algorithm reports an input as an ``yes'' instance, it can also output a successful
candidate bipartition. We are not sure
if this is necessary for this decision problem. For example, the well-known decision problem of primality of integers can be
decided in polynomial time \cite{AKS2004}. However, a ``composite'' output does not come with a prime factor. It is
known from the prime number theorem \cite{Hadamard1896,Poussin1896}
that almost all integers are composite. In this sense, the polynomial solvability of the primality testing problem
seems intuitive.
We would also like to compare this problem with the decision problem of whether a given graph is of class 1 or class 2, i.e.
whether its edge chromatic number is equal to $\Delta$ or $\Delta+1$ where $\Delta$ is the maximum degree of the
given graph. It is known from \cite{ERDOS1977} that almost all graphs
on $n$ vertices are of class 1 as $n$ grows towards infinity. However, it is $NP$-complete to decide whether a graph is of class 1
or class 2 \cite{Holyer1981}. These facts sound more unintuitive. It is almost certain from our experimental results
that the proportion of zero-free graphical degree
sequences of length $n$ that are not potentially bipartite approaches 1 as $n$ grows towards infinity. Is it possible that
the decision problem is actually in $P$? Or, could it be that some hidden classes of hard instances are overlooked by our
experiments and the decision problem is actually $NP$-complete or $NP$-intermediate, should $P\neq NP$.

In this paper we dealt with the decision problem of whether $\chi(\mathbf{d})=2$. In the case that $\mathbf{d}$ is not
potentially bipartite and it is desired to compute $\chi(\mathbf{d})$, we can decide, for each successive
fixed $k\ge 3$, whether there is a $k$-colorable realization of $\mathbf{d}$, until the answer becomes ``yes.''
We conjecture that each of these decision problems is $NP$-complete.

\section{Summary and directions for future research}
\label{sec:conclusion}
We presented a fast algorithm to test whether a graphical degree sequence is potentially bipartite. The algorithm
works very well in practice. It remains open whether the decision problem can be solved in polynomial time. The
complexity of the decision problem whether $\chi(\mathbf{d})\le k$ is also to be resolved.

\section{Acknowledgements}
This research has been supported by a research seed grant of Georgia Southern University. The computational
experiments have been supported by the Talon cluster of Georgia Southern University.

\sloppy

\bibliographystyle{plain}
\bibliography{pb}

\begin{thebibliography}{10}

\bibitem{AKS2004}
Manindra Agrawal, Neeraj Kayal, and Nitin Saxena.
\newblock Primes is in {P}.
\newblock {\em Annals of Mathematics}, 160(2):781--793, 2004.

\bibitem{Poussin1896}
de~la Vall\'{e}e~Poussin.
\newblock Recherches analytiques la th\'{e}orie des nombres premiers.
\newblock {\em Ann. Soc. scient. Bruxelles}, 20:183--256, 1896.

\bibitem{Dvorak2013}
Zden{\v{e}}k Dvo{\v{r}}{\'a}k and Bojan Mohar.
\newblock Chromatic number and complete graph substructures for degree
  sequences.
\newblock {\em Combinatorica}, 33(5):513--529, 2013.

\bibitem{ERDOS1977}
Paul Erd{\H{o}}s and Robin~J. Wilson.
\newblock On the chromatic index of almost all graphs.
\newblock {\em Journal of Combinatorial Theory, Series B}, 23(2):255--257,
  1977.

\bibitem{Favaron1991}
O.~Favaron, M.~Mah{\'e}o, and J.-F. Sacl{\'e}.
\newblock On the residue of a graph.
\newblock {\em Journal of Graph Theory}, 15(1):39--64, 1991.

\bibitem{Gale1957}
D.~Gale.
\newblock A theorem on flows in networks.
\newblock {\em Pacific J. Math}, 7(2):1073--1082, 1957.

\bibitem{GareyJohnson1979}
Michael~R. Garey and David~S. Johnson.
\newblock {\em Computers and Intractability: A Guide to the Theory of
  NP-Completeness}.
\newblock W. H. Freeman \& Co., New York, NY, USA, 1979.

\bibitem{Hadamard1896}
J.~Hadamard.
\newblock Sur la distribution des z\'{e}ros de la fonction zeta(s) et ses
  cons\'{e}quences arithm\'{e}tiques.
\newblock {\em Bull. Soc. math. France}, 24:199--220, 1896.

\bibitem{Holyer1981}
I.~Holyer.
\newblock The ${N}{P}$-completeness of edge-coloring.
\newblock {\em SIAM Journal on Computing}, 10(4):718--720, 1981.

\bibitem{Koiliaris2019}
Konstantinos Koiliaris and Chao Xu.
\newblock Faster pseudopolynomial time algorithms for subset sum.
\newblock {\em ACM Trans. Algorithms}, 15(3):40:1--40:20, 2019.

\bibitem{Mantel1907}
W.~Mantel.
\newblock Problem 28 (solution by {H}. {Gouwentak}, {W}. {Mantel}, {J}.
  {Teixeira} de {Mattes}, {F}. {Schuh} and {W}. {A}. {Wythoff}).
\newblock {\em Wiskundige Opgaven}, 10:60--61, 1907.

\bibitem{MURPHY1991}
Owen Murphy.
\newblock Lower bounds on the stability number of graphs computed in terms of
  degrees.
\newblock {\em Discrete Mathematics}, 90(2):207--211, 1991.

\bibitem{Punnim2002A}
Narong Punnim.
\newblock Degree sequences and chromatic numbers of graphs.
\newblock {\em Graphs and Combinatorics}, 18(3):597--603, 2002.

\bibitem{Rao1981}
S.~B. Rao.
\newblock A survey of the theory of potentially {P}-graphic and forcibly
  {P}-graphic degree sequences.
\newblock In Siddani~Bhaskara Rao, editor, {\em Combinatorics and Graph Theory:
  Lecture Notes in Mathematics, vol 885}, pages 417--440. Springer Berlin
  Heidelberg, 1981.

\bibitem{Ruskey1994}
Frank Ruskey, Robert Cohen, Peter Eades, and Aaron Scott.
\newblock Alley {CATs} in search of good homes.
\newblock In {\em 25th S.E. Conference on Combinatorics, Graph Theory, and
  Computing}, volume 102, pages 97--110. Congressus Numerantium, 1994.

\bibitem{Ryser1957}
H.~J. Ryser.
\newblock Combinatorial properties of matrices of zeros and ones.
\newblock {\em Canadian Journal of Mathematics}, 9:371--377, 1957.

\bibitem{Wang2019}
Kai Wang.
\newblock Efficient counting of degree sequences.
\newblock {\em Discrete Mathematics}, 342(3):888--897, 2019.

\bibitem{Yin2012}
Jian-Hua Yin.
\newblock A short constructive proof of {A}.{R}. {R}ao's characterization of
  potentially $k_{r+1}$-graphic sequences.
\newblock {\em Discrete Applied Mathematics}, 160(3):352--354, 2012.

\end{thebibliography}







\end{document}